\newtheorem{Thm}{Theorem}[section]
\newtheorem{Cor}[Thm]{Corollary}
\numberwithin{equation}{section}
\def\pv#1{\ensuremath{{\sf#1}}}
\def\Cl#1{\ensuremath{\mathcal #1}}
\def\Om#1#2{\ensuremath{\overline\Omega_{#1}{\sf#2}}}
\def\omup#1#2#3{\ensuremath{\Omega^{#1}_{#2}{\sf#3}}}
\def\oms#1#2{\omup{\sigma}{#1}{#2}}
\def\omcpi#1#2{\omup{\kappa_{\pi}}{#1}{#2}}
\def\pj#1{\ensuremath{p_{\sf#1}}}
\def\malcev{\mathop{\raise1pt\hbox{\footnotesize$\bigcirc$\kern-8pt\raise1pt
      \hbox{\tiny$m$}\kern1pt}}}
\begin{document}

\author[J. Almeida]{J. Almeida}
\author[J. C. Costa]{J. C. Costa}
\author[M. Zeitoun]{M. Zeitoun}

\address[J. Almeida]{Centro de Matem\'atica e Departamento de
  Matem\'atica Pura, Faculdade de Ci\^encias, Universidade do Porto,
  Rua do Campo Alegre, 687, 4169-007 Porto, Portugal.
  \href{mailto:jalmeida@fc.up.pt}{jalmeida@fc.up.pt}}

\address[J. C. Costa]{Centro de Matem\'atica e Departamento de
  Matem\'atica e Aplica\c c\~oes, Universidade do Minho, Campus de Gualtar,
  4700-320 Braga, Portugal.
  {\href{mailto:jcosta@math.uminho.pt}{jcosta@math.uminho.pt}}}

\address[M. Zeitoun]{LaBRI, Universit\'e de Bordeaux \& CNRS UMR~5800.  351
  cours de la Lib\'eration, 33405 Talence Cedex, France.
{\href{mailto:mz@labri.fr}{mz@labri.fr}}}  

\title{Reducibility of pointlike problems}

\date{\today}

\begin{abstract}
  We show that the pointlike and the idempotent pointlike problems are
  reducible with respect to natural signatures in the following cases:
  the pseudovariety of all finite semigroups in which the order of
  every subgroup is a product of elements of a fixed set $\pi$ of
  primes; the pseudovariety of all finite semigroups in which every
  regular \Cl J-class is the product of a rectangular band by a group
  from a fixed pseudovariety of groups that is reducible for the
  pointlike problem, respectively graph reducible. Allowing only
  trivial groups, we obtain $\omega$-reducibility of the pointlike and
  idempotent pointlike problems, respectively for the pseudovarieties
  of all finite aperiodic semigroups (\pv A) and of all finite
  semigroups in which all regular elements are idempotents~(\pv{DA}).
\end{abstract}

\keywords{pseudovariety, profinite semigroup, pointlike set, regular
  language, aperiodic semigroup}

\makeatletter

\@namedef{subjclassname@2010}{%
  \textup{2010} Mathematics Subject Classification}
\makeatother

\subjclass[2010]{Primary 20M07; Secondary 20M05, 20M35, 68Q70}

\maketitle

\section{Introduction}
\label{sec:introduction}

For a pseudovariety \pv V of semigroups, the effective computation of
\pv V-pointlike subsets of finite semigroups intervenes in the
solution of various decision problems. One famous example is the
case of \pv G-pointlike sets, where \pv G is the pseudovariety of all
finite groups, for which a concrete algorithm was proposed by Henckell
and Rhodes \cite{Henckell&Rhodes:1991} and later proved by
Ash~\cite{Ash:1991}. It was conceived as a successful tool to
effectively decide whether a finite semigroup divides the power
semigroup of some finite group (see
\cite{Pin:1995,Henckell&Margolis&Pin&Rhodes:1991} for a history of the
problem). The computation of \pv V-idempotent pointlikes sets in turn
yields easily a solution of the membership problem for pseudovarieties
given by Mal'cev products of the form $\pv W\malcev\pv V$, provided
membership in \pv W is decidable \cite[Proposition
4.3]{Henckell:2004}.

The computation of pointlike and idempotent pointlike sets has been
carried out for several pseudovarieties. A general approach for
obtaining theoretical computability was devised by Steinberg and the
first author through tameness
\cite{Almeida&Steinberg:2000a,Almeida&Steinberg:2000b}. The idea is
that there is an obvious semi-algorithm to generate subsets of a given
finite semigroup that are not \pv V-pointlike (respectively not \pv
V-idempotent pointlike), provided \pv V is decidable. To generate the
favorable cases, one needs witnesses in the profinite semigroup,
namely pseudowords that evaluate to the given elements in the
semigroup and are equal over \pv V. The essential property for
tameness is \emph{reducibility}, which means that such witnesses may
be found among pseudowords of a restricted, effectively enumerable
form, such as $\omega$-words; more generally, terms in an implicit
signature should be taken. This general approach may and has been
considered for arbitrary finite systems of equations, the cases of
pointlike and idempotent pointlike sets corresponding respectively to
systems of the forms $x_1=\cdots=x_n$ and $x_1=\cdots=x_n=x_n^2$.

In this paper, we show that the reducibility property holds for
pointlike and idempotent pointlike problems for certain
pseudovarieties under simple assumptions. The two cases that we
consider here are the pseudovarieties of the form $\overline{\pv
  G_\pi}$, of all finite semigroups whose subgroups have orders whose
prime factors belong to the set $\pi$ of primes; and the
pseudovarieties of the form $\pv{DO}\cap\overline{\pv H}$ of all
finite semigroups in which all regular \Cl J-classes are products of
rectangular bands by groups from the pseudovariety \pv H. In
both instances, the case where all subgroups are trivial is of
interest and represents, respectively, the pseudovarieties \pv A, of
all finite aperiodic semigroups, and \pv{DA}, of all finite semigroups
in which all regular elements are idempotents.

\section{Preliminaries}
\label{sec:prelims}

The reader is referred to the standard bibliography
\cite{Almeida:1994a,Rhodes&Steinberg:2009qt} on finite semigroups for
undefined terminology and background.

By an \emph{implicit signature} we mean a set $\sigma$ of pseudowords
(also called implicit operations) over the pseudovariety \pv S of all
finite semigroups, that is, elements of some finitely generated free
profinite semigroup \Om AS, the only requirement being that binary
multiplication is one of them. Since all implicit signatures are
assumed to contain binary multiplication, we omit reference to it when
describing implicit signatures. By definition, pseudowords have a
natural interpretation in every finite (whence also in every
profinite) semigroup, so that every profinite semigroup has a natural
structure of $\sigma$-semigroup. The $\sigma$-subalgebra of~\Om AS
generated by $A$ is denoted \oms AS. Examples of implicit signatures
are given by $\omega$, consisting of the unary operation
$\omega$-power $\_^\omega$, and $\kappa$, consisting of the unary
operation $(\omega-1)$-power $\_^{\omega-1}$.

A subset $P$ of a finite semigroup $S$ is said to be \emph{pointlike} with
respect to a pseudovariety \pv V if, for every onto continuous homomorphism
$\varphi:\Om AS\to S$, there is a subset $P'$ of~\Om AS such that
$\varphi(P')=P$ and $\pj V(P')$ is a
singleton~\cite{Almeida:1996d}, where
  $\pj V:\Om AS\to \Om AV$ denotes the canonical projection from the free
  profinite semigroup to the relatively free pro-\pv V semigroup. An
equivalent formulation in terms of relational morphisms and further discussion
on the interest of this notion can be found in~\cite{Rhodes&Steinberg:2009qt}.
A simple and fruitful interpretation in terms of formal languages has
been formulated and established in~\cite{Almeida:1996d}. The set of all nonempty \pv
V-pointlike subsets of~$S$ constitutes a semigroup under set multiplication
and is denoted $\Cl P_\pv V(S)$. The semigroup of all nonempty subsets of~$S$
will be denoted by $\Cl P(S)$. The problem of computing \pv V-pointlikes is
said to be \emph{$\sigma$-reducible} for an implicit signature $\sigma$ if the
above set $P'$ can always be chosen to be a subset of~\oms AS.

As shown in~\cite{Almeida&Steinberg:2000a}, under suitable additional
assumptions, the $\kappa$-reducibility of the pointlike problem implies its
algorithmic solution. However, the resulting algorithm is merely theoretical
and completely impractical. In the case of the pseudovariety \pv A, a
structural algorithm for computing pointlike sets of finite semigroups has
been obtained by Henckell \cite{Henckell:1988}.
Generalizations and more transparent and shorter proofs can
  be found in~\cite{Henckell&Rhodes&Steinberg:2010} and in \cite{pzfo:2014}
  (the
  latter paper being based on the interpretation of pointlike sets
  from~\cite{Almeida:1996d}). Algorithms with the same flavor have been
obtained in~\cite{Almeida&Costa&Zeitoun:2006} for the pseudovarieties \pv J
and \pv R. Similar techniques to the ones developed
  in~\cite{pzfo:2014} have been applied to show that the pointlike sets of
  size 2 of the pseudovariety \pv{DA} are also effectively computable~\cite{PRZ:mfcs:13}.

Henckell's algorithm actually intervenes in the proof of $\kappa$-reducibility
for the \pv A-pointlike problem presented in Section~\ref{sec:A-k-red-for-ptl}
in the form of the extension obtained
in~\cite{Henckell&Rhodes&Steinberg:2010}. Also essential in our treatment of
idempotent pointlike sets in the aperiodic case is Henckell's result that
every \pv A-idempotent pointlike subset of a finite semigroup is contained in
some \pv A-pointlike set that is idempotent \cite{Henckell:2004}. The
extension of this result to pseudovarieties of the form $\pv A\malcev\pv V$ is
also attributed to Henckell
in~\cite[Theorem~4.5]{Henckell&Rhodes&Steinberg:2010b}.

\section{Reducibility of \texorpdfstring{\pv A}A-pointlike sets}
\label{sec:A-k-red-for-ptl}

Given a set $\pi$ of primes, its complement in the set of all primes
is denoted~$\pi'$. A \emph{$\pi$-group} is a finite group whose order
factors into primes from ~$\pi$. The pseudovariety of all finite
$\pi$-groups is denoted $\pv G_\pi$. For a pseudovariety \pv H of
groups, $\overline{\pv H}$ stands for the pseudovariety consisting of
all finite semigroups all of whose subgroups lie in~\pv H. Note that
$\overline{\pv G_\emptyset}=\pv A$.

Since the additive semigroup of positive integers $\mathbb{Z}_+$ is
free, its profinite completion $\widehat{\mathbb{Z}_+}$ is a free
profinite semigroup. For $\nu\in\widehat{\mathbb{Z}_+}$ and
an element $s$ of a profinite semigroup $S$, denote by $s^\nu$ the
image of $\nu$ under the unique continuous homomorphism
$\widehat{\mathbb{Z}_+}\to S$ that maps $1$ to~$s$.

In case $\pi=\emptyset$, we let $\nu_\pi=\omega+1$. Otherwise, let
$\nu_\pi$ be any accumulation point of the sequence $((p_1\cdots
p_n)^{n!})_n$ in~$\widehat{\mathbb{Z_+}}$, where $p_1,p_2,\ldots$ is
an enumeration of the set~$\pi$, possibly with repetitions. In case
$\pi$ consists of all primes, it is easy to see that $\nu_\pi=\omega$.
Note that $\overline{\pv G_\pi}$ is defined by the pseudoidentity
$x^{\nu_\pi}=x^\omega$ while $\pv G_{\pi'}$ is defined by
$x^{\nu_\pi-1}=1$. Denote by $\kappa_\pi$ the implicit signature 
obtained by enriching the signature $\kappa$ with the operations
$x^\mu$, if it is not already expressible by an $\omega$-term,
whenever there exists $k\in\mathbb{Z}_+$ such that
$k(\mu+1)=\nu_\pi-1$. In particular, one can easily check that
$\kappa$ coincides with both $\kappa_\emptyset$ and $\kappa_\pi$ if
$\pi$ consists of all primes.

A subset of~$\Cl P(S)$ is said to be \emph{downward closed} if,
whenever it contains $P$ and $Q\subseteq P$, it also contains $Q$. For
a subset $P$ of a finite semigroup $S$, we denote by $P^{\omega+*}$
the set $P^\omega\bigcup_{n\geq1}P^n$.
Given a set of primes $\pi$ and a semigroup $S$, we define %
$\Cl C\Cl P_\pi(S)$ to be the smallest downward closed subsemigroup
of~$\Cl P(S)$ containing all singleton subsets of~$S$ and which
contains $P^{\omega+*}$ whenever it contains an element $P$ which
generates a cyclic $\pi'$-subgroup of~$\Cl P(S)$.
It follows from \cite[Theorem~2.3]{Henckell&Rhodes&Steinberg:2010}
that the equality $\Cl C\Cl P_\pi(S)=\Cl P_{\overline{\pv G_\pi}}(S)$
holds for every finite semigroup $S$, which implies that %
$\Cl P_{\overline{\pv G_\pi}}(S)$~is computable in case $\pi$~is a
recursive set of primes. The following theorem strengthens the easy
direction of this result, showing not only that elements of $\Cl C\Cl
P_\pi(S)$ are $\overline{\pv G_\pi}$-pointlike subsets, but also that
this can be witnessed by $\kappa_\pi$-terms.
  
\begin{Thm}
  \label{t:HRS-kpi-reducible}
  Let $\pi$ be a set of primes and let $\varphi:\Om AS\to S$ be a
  continuous homomorphism onto a finite semigroup. Then every $P\in\Cl
  C\Cl P_\pi(S)$ has the following property:
  \begin{equation}
    \label{eq:prop-P}
    \parbox[c]{.8\textwidth}{there exists a function
      $\alpha_P:P\to\omcpi AS$ such that the equality
      $\varphi\,\alpha_P=\mathrm{id}_P$ holds
      and $p_{\overline{\pv G_\pi}}\,\alpha_P$ is constant.}
  \end{equation}
\end{Thm}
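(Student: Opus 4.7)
My plan is to prove property~\eqref{eq:prop-P} by structural induction on the construction of $P$ as an element of $\Cl C\Cl P_\pi(S)$. The easy cases are: (i) $P = \{s\}$ a singleton, where I choose any word $w \in A^+$ with $\varphi(w) = s$ and set $\alpha_{\{s\}}(s) := w$; (ii) $P = QR$ a product, where for each $r \in QR$ I fix a decomposition $r = qs$ with $q \in Q$, $s \in R$ and set $\alpha_{QR}(r) := \alpha_Q(q)\,\alpha_R(s)$, yielding constant $c_Q\,c_R$; and (iii) $P' \subseteq P$ by downward closure, where I restrict $\alpha_P$ to $P'$.

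The crux is the closure under $P \mapsto P^{\omega+*}$ under the hypothesis that $P$ generates a cyclic $\pi'$-subgroup of $\Cl P(S)$ of order $d$. Let $c$ be the constant given by $\alpha_P$. For each $r \in P^{\omega+*}$, choose $n\geq 1$ with $r\in P^n$ and a factorization $r = p_1\cdots p_n$ with $p_i\in P$, and form $w_r := \alpha_P(p_1)\cdots\alpha_P(p_n)\in\omcpi AS$, so that $\varphi(w_r) = r$ and $p_{\overline{\pv G_\pi}}(w_r)=c^n$. My candidate is
\[
\alpha'(r) := w_r^{\nu_\pi}.
\]
This lies in $\omcpi AS$ because $x^{\nu_\pi - 2}$ is $\kappa_\pi$-expressible (take $k=1$ in the definition of $\kappa_\pi$), so $x^{\nu_\pi} = x^{\nu_\pi-2}\cdot x\cdot x$ is a $\kappa_\pi$-term. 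Using the defining pseudoidentity $x^{\nu_\pi}=x^\omega$ of $\overline{\pv G_\pi}$ and the idempotence of $c^\omega$,
\[
p_{\overline{\pv G_\pi}}(\alpha'(r)) = (c^n)^{\nu_\pi} = (c^{\nu_\pi})^n = (c^\omega)^n = c^\omega,
\]
which is independent of $r$ and thus furnishes the required common value.

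The remaining identity $\varphi(\alpha'(r)) = r^{\nu_\pi} = r$ in $S$ is the main obstacle. At the subset level in $\Cl P(S)$, since $d$ is a $\pi'$-number and $\pv G_{\pi'}$ satisfies $x^{\nu_\pi - 1}=1$, we have $\nu_\pi \equiv 1\pmod d$, hence $P^{\nu_\pi}=P$. Upgrading this to the element-level $r^{\nu_\pi}=r$ amounts to showing that every $r\in P^{\omega+*}$ lies in a subgroup of $S$ whose order divides $d$ (and is therefore a $\pi'$-group, in which $r^{\nu_\pi-1}$ is the identity, so $r^{\nu_\pi}=r$). I would establish this structural claim by analyzing the monogenic subsemigroup $\langle r \rangle$: using $P^{d+1}=P$, for every $p\in P$ the profinite limit of the sequence $p^{(d+1)^k}\in P^{(d+1)^k}=P$ yields $p^\omega\in P$; then $p^{\omega+1}=p^\omega\,p\in P^\omega\,P = P$, and a combinatorial closure argument exploiting the cyclic $\pi'$-structure of $P$ in $\Cl P(S)$ forces each $p\in P$ to coincide with $p^{d+1}$ in $S$, i.e., $p$ is a group element of period dividing $d$. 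The property then propagates to products and hence to $P^{\omega+*}$, closing the induction; this element-level inheritance from the subset-level cyclic $\pi'$-structure is the crucial technical step on which the whole proof rests.
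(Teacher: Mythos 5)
The base cases (singletons), downward closure, and products of your induction match the paper's proof and are fine. The difficulty, as you correctly identify, is the $P\mapsto P^{\omega+*}$ step, and there your argument has a genuine gap.

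Your candidate $\alpha'(r)=w_r^{\nu_\pi}$ satisfies $\varphi(\alpha'(r))=r$ only if $r^{\nu_\pi}=r$ in~$S$, which forces $r$ to be a group element lying in a $\pi'$-subgroup of~$S$. You acknowledge this is the crux, but the argument you sketch for it does not go through. First, the sequence $p^{(d+1)^k}$ need not converge to $p^\omega$: if $p$ has period $r$ in its monogenic subsemigroup, the limit is $p^{m}$ with $m$ determined by the residues $(d+1)^k\bmod r$, and in general this is some other element of the cyclic part of $\langle p\rangle$, not the idempotent. Second, and more seriously, the step ``a combinatorial closure argument \ldots\ forces each $p\in P$ to coincide with $p^{d+1}$'' is not an argument; the implication from the subset-level identity $P^{d+1}=P$ in $\mathcal{P}(S)$ to the element-level identity $p^{d+1}=p$ is exactly what needs proof, and it is not obvious (indeed, without the extra constraint that $P$ arose from the $\Cl C\Cl P_\pi$ construction it is false: take $S=\{1,a,0\}$ with $a^2=0$ and $P=S$, then $P^2=P$ but $a^2\neq a$). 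Your proposal rests on an element-level group structure that you never establish.

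The paper avoids this issue entirely. Instead of asking $r^{\nu_\pi}=r$, it produces for each $p\in P$ a factorization $p=set$ in $S$ where $e=e^2$ lies in the subsemigroup $P^k$ (with $k$ the order of the cyclic $\pi'$-group generated by~$P$). This idempotent is extracted by a pigeonhole argument: writing $p=q_1\cdots q_\ell p'$ with each $q_i\in P^k$ and $\ell=|P^k|+1$, two partial products $q_1\cdots q_i=q_1\cdots q_j$ coincide, so $p=q_1\cdots q_i\,(q_{i+1}\cdots q_j)^\omega\,q_{j+1}\cdots q_\ell p'$. One then sets $\beta(p)=\alpha_P(s_1)\cdots\alpha_P(s_k)\,(\alpha_P(e_1)\cdots\alpha_P(e_k))^\mu\,\alpha_P(t)$ where $\nu_\pi-1=k(\mu+1)$; under $\varphi$ the $\mu$-power is applied to the idempotent $e$, so $\varphi(\beta(p))=se^\mu t=set=p$ automatically, with no assumption whatsoever on whether $p$ is a group element of~$S$. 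The $\kappa_\pi$-exponent count then gives $p_{\overline{\pv G_\pi}}(\beta(p))=c^{k(\mu+1)+1}=c^{\nu_\pi}=c^\omega$. You should replace your $\alpha'(r)=w_r^{\nu_\pi}$ step with this factorization-and-idempotent argument; the rest of your induction then closes exactly as the paper does, by forming $\alpha_{P^n}$ from $\beta$ and taking a union over $n$.
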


\begin{proof}
  To prove the theorem, we proceed by induction on the construction of
  the semigroup $\Cl C\Cl P_\pi(S)$ that is immediately derived from
  its definition. At the base of the induction, we take the subset of
  $\Cl P(S)$ consisting of the singleton subsets of~$S$, for which
  property~(\ref{eq:prop-P}) obviously holds, as the restriction of
  $\varphi$ to \omcpi AS is onto. For the induction steps, we need to
  distinguish three types of transformations on subsets of~$S$.
  
  For taking subsets, it suffices to observe that, if $Q\subseteq P$,
  $\alpha_P$ verifies~\eqref{eq:prop-P}, and $\alpha_Q$ is taken to be
  the restriction of $\alpha_P$ to $Q$, then $\alpha_Q$
  verifies~\eqref{eq:prop-P} for $Q$ in the place of $P$.

  For taking products, suppose that $P,Q\in\Cl P(S)$ are such that
  $\alpha_P$ and $\alpha_Q$ verify the corresponding
  properties~\eqref{eq:prop-P} for $P$ and $Q$. Let $R=PQ$ and define
  $\alpha_R:R\to\omcpi AS$ by letting, for $r\in R$,
  $\alpha_R(r)=\alpha_P(p)\alpha_Q(q)$ where $r=pq$ is any chosen
  factorization of $r$ with $p\in P$ and $q\in Q$. Given $r\in R$,
  consider its chosen factorization $r=pq$ with $p\in P$ and $q\in Q$.
  Then we have
  $$\varphi(\alpha_R(r)) %
  =\varphi(\alpha_P(p)\alpha_Q(q)) %
  =\varphi(\alpha_P(p))\varphi(\alpha_Q(q)) %
  =pq %
  =r.$$
  Similarly, one shows that $p_{\overline{\pv G_\pi}}\,\alpha_R$ is a
  constant mapping. This shows that $R$ also has
  property~\eqref{eq:prop-P}.

  Finally, suppose that $P$ is a subset of~$S$ for which there is a
  function $\alpha_P$ satisfying property~\eqref{eq:prop-P} and such
  that $P$ generates a cyclic $\pi'$-subgroup of~$\Cl P(S)$. Let
  $Q=P^{\omega+*}$ and note that $Q=\bigcup_{n=1}^mP^n$ for some $m$.

  By the assumption that the cyclic subgroup of $\Cl P(S)$ generated by $P$ is
  a $\pi'$-group, the equality $P^{\nu_\pi}=P$ holds. Hence, there is some
  $\mu\in\widehat{\mathbb{Z}_+}$ such that
  $\nu_\pi-1=k(\mu+1)$, where $k$ denotes the order of the
    $\pi'$-group generated by $P$. This implies that the unary operation
  $x^\mu$ belongs to~$\kappa_\pi$.

  Note that $P^k$ is a subsemigroup of~$S$. Given $p\in P$, since
  $p\in P^{k\ell+1}$ for every $\ell\ge 0$, there exists a
  factorization $p=q_1\cdots q_\ell p'$, where each $q_i\in P^k$ and
  $p'\in P$. Choosing $\ell=|P^k|+1$, there are integers $i$ and $j$
  such that $1\le i<j\le\ell$ such that $q_1\cdots q_i=q_1\cdots q_j$.
  Hence, there are factorizations
  $$p %
  =q_1\cdots q_i\cdot(q_{i+1}\cdots q_j)^\omega %
  \cdot q_{j+1}\cdots q_\ell p' %
  =s\cdot e\cdot t,$$ %
  where $s=s_1\cdots s_k$, $e=e^2=e_1\cdots e_k$, and $t$ together
  with all $s_m$ and all $e_m$ belong to~$P$. Having chosen such a
  factorization, we now define
  $$\beta(p)=\alpha_P(s_1)\cdots\alpha_P(s_k) %
  \bigl(\alpha_P(e_1)\cdots\alpha_P(e_k)\bigr)^\mu %
  \alpha_P(t).$$ %
  Note that $\beta(p)$ is given by a $\kappa_\pi$-term. Moreover, we
  obtain the equalities
  \begin{align*}
    \varphi(\beta(p))&=s_1\cdots s_k(e_1\cdots e_k)^\mu t=se^\mu t=set=p, \\
    \pj{\overline{\pv G_\pi}}(\beta(p))
    &=\pj{\overline{\pv G_\pi}}(\alpha_P(p))^{k+\mu k+1}
    =\pj{\overline{\pv G_\pi}}(\alpha_P(p))^{\nu_\pi}
    =\pj{\overline{\pv G_\pi}}(\alpha_P(p))^\omega.
  \end{align*}
  It follows that we may assume that the function $\alpha_P$ is such
  that the constant value of $p_{\overline{\pv G_\pi}}\,\alpha_P$ is
  an idempotent $f$. This entails that, for the function
  $\alpha_{P^n}$ defined on $P^n$ according to the product case of the
  induction, the composite $p_{\overline{\pv G_\pi}}\,\alpha_{P^n}$
  still has the same constant value $f$. Hence, we may extract from
  the relation $\bigcup_{n=1}^m\alpha_{P^n}$ a function
  $\alpha_{P^{\omega+*}}$ which has the required property
  \eqref{eq:prop-P} for $P^{\omega+*}$. This concludes the inductive
  step and the proof of the theorem.
\end{proof}

\begin{Cor}
  \label{c:ovGpi-reducible}
  If $\pi$ is an arbitrary set of primes, then the pseudovariety
  $\overline{\pv G_\pi}$ is $\kappa_\pi$-reducible for pointlike sets
  and also $\kappa_\pi$-reducible for idempotent pointlike sets. In
  particular, \pv A~is $\kappa$-reducible for both pointlike sets and
  idempotent pointlike sets.
\end{Cor}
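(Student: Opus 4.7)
The plan is to derive both parts of the corollary from Theorem~\ref{t:HRS-kpi-reducible}, together with two facts already invoked in the paper: the Henckell--Rhodes--Steinberg equality $\Cl C\Cl P_\pi(S) = \Cl P_{\overline{\pv G_\pi}}(S)$, and the extended form of Henckell's result stating that every $\overline{\pv G_\pi}$-idempotent pointlike subset of a finite semigroup lies inside some $\overline{\pv G_\pi}$-pointlike set that is idempotent in $\Cl P(S)$. Since $\overline{\pv G_\emptyset} = \pv A$ and $\kappa_\emptyset = \kappa$, the statement about $\pv A$ will then be immediate.

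The pointlike part is the easier one: given an onto continuous homomorphism $\varphi:\Om AS \to S$ and a $\overline{\pv G_\pi}$-pointlike subset $P$ of $S$, the first fact yields $P \in \Cl C\Cl P_\pi(S)$, so Theorem~\ref{t:HRS-kpi-reducible} supplies $\alpha_P:P \to \omcpi AS$ with $\varphi\,\alpha_P = \mathrm{id}_P$ and $\pj{\overline{\pv G_\pi}}\,\alpha_P$ constant; the image $P' = \alpha_P(P) \subseteq \omcpi AS$ is the required $\kappa_\pi$-witness.

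The idempotent case is the main obstacle, since the constant value produced by the theorem is not a priori an idempotent of $\Om A{\overline{\pv G_\pi}}$. To handle this, I first use Henckell's result to embed $P$ into a $\overline{\pv G_\pi}$-pointlike set $Q$ with $Q^2 = Q$, and apply the theorem to obtain $\alpha_Q:Q \to \omcpi AS$ with constant value $w$. Then I will reuse the $\beta$-construction from the inductive $P^{\omega+*}$-step in the proof of Theorem~\ref{t:HRS-kpi-reducible}, now applied to $Q$ itself with the order $k$ of the cyclic subgroup of $\Cl P(S)$ generated by $Q$ taken to be $1$ (which is trivially a $\pi'$-group). Concretely, since $Q = Q^n$ for all $n \geq 1$, writing a given $q \in Q$ as a product of $|Q|+2$ elements of $Q$ and applying pigeonhole to partial products produces a decomposition $q = s \cdot e \cdot t$ with $s, e, t \in Q$ and $e$ idempotent in $S$. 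Setting $\mu = \nu_\pi - 2$, which belongs to $\kappa_\pi$ by the very definition of that signature (taking $k = 1$ in the clause $k(\mu+1) = \nu_\pi - 1$), the element
\[
\beta(q) = \alpha_Q(s) \cdot (\alpha_Q(e))^\mu \cdot \alpha_Q(t) \in \omcpi AS
\]
satisfies $\varphi(\beta(q)) = s \cdot e \cdot t = q$ and $\pj{\overline{\pv G_\pi}}(\beta(q)) = w^{\mu+2} = w^{\nu_\pi} = w^\omega$, an idempotent of $\Om A{\overline{\pv G_\pi}}$. Hence $P' = \beta(P) \subseteq \omcpi AS$ witnesses $\kappa_\pi$-reducibility of the idempotent pointlike problem.
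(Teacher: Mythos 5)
Your proof is correct and follows essentially the same route as the paper: the pointlike part is exactly the paper's argument via the Henckell--Rhodes--Steinberg equality plus Theorem~\ref{t:HRS-kpi-reducible}, and the idempotent part relies on the same citation of \cite[Theorem~4.5]{Henckell&Rhodes&Steinberg:2010b} together with the Mal'cev product identity $\pv A\malcev\overline{\pv G_\pi}=\overline{\pv G_\pi}$. You simply make explicit what the paper leaves implicit, namely that one can re-run the $P^{\omega+*}$ step of Theorem~\ref{t:HRS-kpi-reducible} on the idempotent overset $Q$ with $k=1$ (so $\mu=\nu_\pi-2$) to force the constant value to be an idempotent of $\Om A{\overline{\pv G_\pi}}$.
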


\begin{proof}
  Recall that by~\cite[Theorem~2.3]{Henckell&Rhodes&Steinberg:2010},
  the equality $\Cl C\Cl P_\pi(S)=\Cl P_{\overline{\pv G_\pi}}(S)$
  holds for every finite semigroup $S$. The result for pointlike sets
  then follows directly from
  Theorem~\ref{t:HRS-kpi-reducible}. For idempotent pointlike sets, it
  suffices to invoke, additionally,
  \cite[Theorem~4.5]{Henckell&Rhodes&Steinberg:2010b} since $\pv
  A\malcev\overline{\pv G_\pi}=\overline{\pv G_\pi}$.
\end{proof}

\section{Reducibility of \texorpdfstring{\pv{DA}}{DA}-pointlike sets}
\label{sec:DA-reduc}

For the pseudovariety \pv{DA}, the proof below of
$\kappa$-reducibility of the pointlike problem is inspired by
\cite[Lemma~5.10]{Almeida&Costa&Zeitoun:2004}. It is of a more
syntactical nature than the proof of
Theorem~\ref{t:HRS-kpi-reducible}, using central basic factorizations
as introduced in~\cite{Almeida:1996c}. A further parameter that we
proceed to introduce plays a key role. Given $u\in\Om AS$, there may or
may not be a central basic factorization of the form %
$u=u_0a_0\cdot u'_0\cdot b_0u''_0$ with
$c(u)=c(u_0)\uplus\{a_0\}=c(u''_0)\uplus\{b_0\}$. We are interested in
iterating such a factorization on the middle factor $u'_0$ while it is
possible to do so without reducing the content, that is, while
$c(u'_0)=c(u)$. The supremum of the number of times we can keep
iterating such a factorization on the middle factor is denoted
$\|u\|$.

\begin{Thm}
  \label{t:DA-k-red}
  Let \pv H be a pseudovariety of groups and suppose that
  $\sigma_0$~is an implicit signature such that the \pv H-pointlike
  problem is $\sigma_0$-reducible. Let $\pv V=\pv{DO}\cap\overline{\pv
    H}$ and let $\sigma=\sigma_0\cup\{\_^\omega\}$. Suppose $S$ is a
  finite semigroup, $\varphi:\Om AS\to S$ is an onto homomorphism, and
  $\{s_1,\ldots,s_n\}$ is a \pv{V}-pointlike subset of~$S$. Given
  $u_1,\ldots,u_n\in\Om AS$ such that $\varphi(u_i)=s_i$
  ($i=1,\ldots,n$) and
  \begin{equation}
    \label{eq:1}
    \pj{V}(u_1)=\cdots=\pj{V}(u_n),
  \end{equation}
  there exist $w_1,\ldots,w_n\in\oms AS$
  such that
  \begin{equation}
    \label{eq:2}
    \varphi(w_i)=s_i\ (i=1,\ldots,n) %
    \mbox{ and } %
    \pj{V}(w_1)=\cdots=\pj{V}(w_n).
  \end{equation}
\end{Thm}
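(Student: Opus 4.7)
The plan is to prove the theorem by induction on the pair $(|c(u_1)|, \|u_1\|)$ ordered lexicographically, where $c(\cdot)$ denotes the content. First I would observe that since $\pv{Sl} \subseteq \pv V$, the hypothesis $\pj V(u_1) = \cdots = \pj V(u_n)$ forces $c(u_1) = \cdots = c(u_n) = c$; and since the central basic factorization is determined modulo $\pv V$ (as $\pv R \vee \pv L \subseteq \pv{DA} \subseteq \pv V$), the parameters $\|u_i\|$ coincide as well. Thus the induction parameter is well-defined.

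For the base case $|c| = 1$, say $c = \{a\}$, all $u_i$ lie in the closed subsemigroup of $\Om AS$ generated by $a$, which is a free procyclic profinite semigroup. On a single letter, the congruences induced by $\pv V$ and by $\pv H$ coincide, since $\pv{DO}$ imposes no further relation in that setting. Consequently, $\{s_1,\ldots,s_n\}$ is a $\pv H$-pointlike subset of the cyclic subsemigroup of $S$ generated by $\varphi(a)$, and the assumed $\sigma_0$-reducibility of the $\pv H$-pointlike problem supplies the required $w_i \in \oms AS$.

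For the inductive step ($|c| \geq 2$), I would apply central basic factorizations $u_i = \alpha_i \, a \, \mu_i \, b \, \gamma_i$, with $c(\alpha_i) = c \setminus \{a\}$ and $c(\gamma_i) = c \setminus \{b\}$. By $\pv V$-invariance, the markers $a, b$ are independent of $i$, and the components are pairwise $\pv V$-equivalent: $\pj V(\alpha_1) = \cdots = \pj V(\alpha_n)$, and likewise for the $\mu_i$ and for the $\gamma_i$. Outer induction (on strictly smaller content) applied to the $\alpha_i$ and $\gamma_i$ yields $\sigma$-term witnesses $w'_i$ and $w''_i$. For the middle pieces $\mu_i$, if $c(\mu_i) \subsetneq c$ then outer induction applies directly; if $c(\mu_i) = c$ and $\|u_1\| < \infty$, the definition of $\|\cdot\|$ gives $\|\mu_1\| < \|u_1\|$ and inner induction applies. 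In either sub-case, one obtains middle witnesses $w'''_i$, and the assembly $w_i = w'_i \, a \, w'''_i \, b \, w''_i$ lies in $\oms AS$ and satisfies the required conditions.

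The main obstacle will be the remaining case $\|u_1\| = \infty$. Iterating the central basic factorization on the middle indefinitely produces, by compactness of $\Om AS$, a factorization $u_i = \lambda_i \, e_i \, \rho_i$ in which $e_i$ is an idempotent of content $c$ and in which the $e_i$ share a common $\pv V$-class. The challenge is to manufacture a $\sigma$-term representative of this common class, and the natural move is to write it as $\tau_i^\omega$ using the $\_^\omega$ operation---which is precisely why $\sigma_0$ was enlarged to $\sigma = \sigma_0 \cup \{\_^\omega\}$. Constructing such $\tau_i$ simultaneously so that each $\varphi(\tau_i^\omega) = \varphi(e_i)$ holds and the $\pj V(\tau_i^\omega)$ all agree, most likely via a subsidiary recursion whose input data is strictly smaller than $(|c|, \|u_1\|)$, is where the substantive work of the proof will reside.
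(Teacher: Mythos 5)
Your overall inductive frame (induction on content, central basic factorization, separating $\|u_1\| < \infty$ from $\|u_1\| = \infty$) matches the paper's, but two points need attention, one minor and one substantive.

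The minor one is the base case. It is not true that the congruences of $\pv V$ and $\pv H$ coincide on one letter: $\Om{\{a\}}V$ is roughly $\mathbb N$ glued to a free pro-$\pv H$ group, so $\pj V$ separates finite words from each other and from the group part, whereas $\pj H$ does not. What is true is that if the $u_i$ are infinite on the letter $a$, then $u_i = a^\omega u_i$ and \eqref{eq:1} \emph{implies} $\pj H(u_1)=\cdots=\pj H(u_n)$ (since $\pv H\subseteq\pv V$), so an $(\pv H,\sigma_0)$-reduction $(w_i')$ exists and $w_i = a^\omega w_i'$ works; the finite-word case is trivial. Easy to repair, but the statement as you wrote it is false.

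The substantive gap is exactly where you flag it, and your sketch of how to fill it points in the wrong direction. A ``subsidiary recursion whose input data is strictly smaller than $(|c|,\|u_1\|)$'' cannot exist in the $\|u_1\|=\infty$ case: the middle factor retains full content and infinite rank, so there is no smaller instance to recurse on. The paper's resolution is not recursive. One iterates the central basic factorization $l$ times, applies the already-available content-reduction to all outer factors $u_{i,p}$ and $u''_{i,p}$, and then uses finiteness of $S$ (pigeonhole on the $n$-tuple of pairs of prefix/suffix images in $S\times S$) to find $k<l$ where these tuples repeat. This repetition licenses inserting $\omega$-powers $(w_{i,k+1}a_{k+1}\cdots w_{i,l}a_l)^\omega$ and $(b_l w''_{i,l}\cdots b_{k+1}w''_{i,k+1})^\omega$ without changing the $\varphi$-image. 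The crucial observation you are missing is what happens next: with those idempotents of full content flanking the middle, the structure of $\pv{DO}$ forces the entire expression to project to a group $\mathcal H$-class of $\Om AV$, and that group is controlled by $\pv H$ alone. Consequently the middle factor $u'_{i,k}$ needs only an $(\pv H,\sigma)$-reduction, not a $(\pv V,\sigma)$-reduction---this is precisely where the hypothesis of $\sigma_0$-reducibility of the $\pv H$-pointlike problem earns its keep in the infinite case, in parallel with its role in the base case. Without this identification, the argument cannot close.
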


For shortness, we say that the $n$-tuple $(w_1,\ldots,w_n)$ of
$\sigma$-terms is a \emph{$(\pv V,\sigma)$-reduction} of
$(u_1,\ldots,u_n)$ if it satisfies property~\eqref{eq:2}.

\begin{proof}
  Without loss of generality, we may assume that $S$ has a content
  function, that is that the content function $c:\Om
  AS\to\mathcal{P}(A)$ factors through~$\varphi$. Note that,
  by~(\ref{eq:1}), we must have $c(u_1)=\cdots=c(u_n)$.
  
  We show by induction on $|c(u_1)|$ that the $u_i$ may be replaced by
  $\kappa$-terms $w_i$ satisfying properties~\eqref{eq:2}. The case
  $c(u_1)=\{a\}$ is rather easy. Indeed, in case $u_1$ is a finite
  word, by~(\ref{eq:1}) so are all $u_i$ and, therefore, they are
  $\sigma$-terms. Otherwise, for each $i$, we have $u_i=a^\omega u_i$.
  As~\eqref{eq:1} entails $\pj H(u_1)=\cdots=\pj H(u_n)$, there exists
  an $(\pv H,\sigma_0)$-reduction $(w_1',\ldots,w_n')$ of
  $(u_1,\ldots,u_n)$. Then, $w_i=a^\omega w_i'$ ($i=1,\ldots,n$)
  defines a $(\pv V,\sigma)$-reduction of~$(u_1,\ldots,u_n)$.

  Suppose that the claim holds whenever $|c(u_1)|<K$ and suppose an
  instance of the problem is given in which $|c(u_i)|=K$. Factorize
  $u_i$ as
  \begin{equation}
    \label{eq:4}
    u_i=%
    u_{i,0}a_0u_{i,1}a_1\cdots u_{i,l}a_l%
    \cdot u'_{i,l}\cdot %
    b_lu''_{i,l}\cdots b_1u''_{i,1}b_0u''_{i,0} %
  \end{equation}
  where %
  $c(u_i) %
  =c(u_{i,p})\uplus\{a_p\} %
  =c(u''_{i,q})\uplus\{b_q\}$ %
  ($p,q=0,\ldots,l$). We take here $l$ to be arbitrary if
  $\|u_i\|=\infty$ or $l=\|u_i\|$ otherwise. By~\eqref{eq:1} and
    using~\cite{Almeida:1996c}, we deduce that
  the $\|u_i\|$ are the same for all~$i$ and, moreover, the sequences
  of markers $a_0,a_1,\ldots,a_l$ and $b_0,b_1,\ldots,b_l$ are also
  the same for all~$i$ and \pv{V} satisfies each of the following
  pseudoidentities for all $i,j\in\{1,\ldots,n\}$ and
  $p\in\{0,\ldots,l\}$:
  $$u_{i,p}=u_{j,p},\ %
  u'_{i,l}=u'_{j,l},\ %
  u''_{i,p}=u''_{j,p}$$
  By construction, $|c(u_{i,p})|=|c(u''_{i,p})|=|c(u_1)|-1$ and so, by
  the induction hypothesis, for each $p$ there exist %
  $(\pv V,\sigma)$-reductions $(w_{1,p},\ldots,w_{n,p})$ of
  $(u_{1,p},\ldots,u_{n,p})$ and $(w''_{1,p},\ldots,w''_{n,p})$ of
  $(u''_{1,p},\ldots,u''_{n,p})$.
  
  We next distinguish two cases. In the first case, we assume
  $\|u_1\|<\infty$. By the choice of $l=\|u_i\|=\|u_1\|$, either
  $|c(u'_{i,l})|<|c(u_1)|$ or there is a factorization of $u'_{i,l}$
  of one of the forms $\alpha x\beta$ or $\alpha y\beta x\gamma$ with
  $c(u'_{i,l})=c(\alpha)\uplus\{x\}=c(\beta)\uplus\{x\}$ or
  $c(u'_{i,l})=c(\alpha y\beta)\uplus\{x\}=c(\beta
  x\gamma)\uplus\{y\}$, respectively. Moreover, the same case occurs
  for all $i\in\{1,\ldots,n\}$ and the factors in the same positions
  must have the same value under the projection \pj{V}. Applying the
  induction hypothesis again to each of the factors of the $u'_{i,l}$
  thus determined, we deduce that there exists a %
  $(\pv V,\sigma)$-reduction $(w'_{1,l},\ldots,w'_{n,l})$ of
  $(u'_{1,l},\ldots,u'_{n,l})$. One may then verify that, taking
  $$w_i = %
  w_{i,0}a_0 w_{i,1}a_1\cdots w_{i,l}a_l %
  \cdot w'_{i,l}\cdot %
  b_lw''_{i,l}\cdots b_1w''_{i,1} b_0w''_{i,0}$$
  defines a $(\pv V,\sigma)$-reduction $(w_1,\ldots,w_n)$ of the
  original $n$-tuple $(u_1,\ldots,u_n)$.
  
  It remains to handle the case where $\|u_1\|=\infty$. Consider, for
  each $l$, the $n$-tuple of pairs of~$S\times S$
  $$\bigl(\varphi(w_{i,0}a_0 w_{i,1}a_1\cdots w_{i,l}a_l), %
  \varphi(b_lw''_{i,l}\cdots b_1w''_{i,1} b_0w''_{i,0})\bigr) %
  \quad(i=1,\ldots,n).$$
  Since $S$ is finite, there are indices $k$ and $l$ such that $k<l$
  and the $n$-tuples corresponding to these two indices coincide.
  Thus, for every $i=1,\ldots,n$, we have
  \begin{align*}
    \lefteqn{\varphi(w_{i,0}a_0 w_{i,1}a_1\cdots w_{i,k}a_k)}\\
    &\quad=
    \varphi(w_{i,0}a_0 w_{i,1}a_1\cdots w_{i,k}a_k
    (w_{i,k+1}a_{k+1}\cdots w_{i,l}a_l))\\
    &\quad=
    \varphi(w_{i,0}a_0 w_{i,1}a_1\cdots w_{i,k}a_k
    (w_{i,k+1}a_{k+1}\cdots w_{i,l}a_l)^\omega),
  \end{align*}
  and, similarly,
  \begin{align*}
    \lefteqn{\varphi(b_kw''_{i,k}\cdots b_1w''_{i,1} b_0w''_{i,0})}\\
    &\quad=
    \varphi((b_lw''_{i,l}\cdots b_{k+1}w''_{i,k+1})^\omega
    b_kw''_{i,k}\cdots b_1w''_{i,1} b_0w''_{i,0}).
  \end{align*}
  Let $(w'_{1,k},\ldots,w'_{n,k})$ be an $(\pv H,\sigma)$-reduction of
  $(u'_{1,k},\ldots,u'_{n,k})$. Note that since $S$ is assumed
    to have  a content function, the content of
    $w'_{i,k}$ is the same as that of $u'_{i,k}$. Take
  \begin{align*}
    \lefteqn{w_i=  %
      w_{i,0}a_0 w_{i,1}a_1\cdots w_{i,k}a_k %
      (w_{i,k+1}a_{k+1}\cdots w_{i,l}a_l)^\omega} \\
    &\qquad\cdot w'_{i,k} %
    \cdot (b_lw''_{i,l}\cdots b_{k+1}w''_{i,k+1})^\omega %
    b_kw''_{i,k}\cdots b_1w''_{i,1} b_0w''_{i,0}.
  \end{align*}
  Then again one verifies that $(w_1,\ldots,w_n)$ is a $(\pv
  V,\sigma)$-reduction of the original $n$-tuple $(u_1,\ldots,u_n)$.
\end{proof}

\begin{Cor}
  \label{c:DOcapbarH-ptl}
  If the pseudovariety of groups \pv H is $\sigma$-reducible for the
  pointlike problem, then $\pv V=\pv{DO}\cap\overline{\pv H}$ is
  $\sigma\cup\{\_^\omega\}$-reducible for the pointlike problem.
\end{Cor}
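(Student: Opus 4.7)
The plan is to derive this corollary as a direct consequence of Theorem~\ref{t:DA-k-red}, essentially by unpacking the definition of $\sigma$-reducibility of the pointlike problem on both sides and observing that the theorem already provides exactly the witnesses that this definition demands.

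First, I would start with an arbitrary onto continuous homomorphism $\varphi:\Om AS\to S$ and an arbitrary \pv{V}-pointlike subset $P$ of $S$, where $\pv V=\pv{DO}\cap\overline{\pv H}$. Writing $P=\{s_1,\ldots,s_n\}$ and applying the definition of pointlike to $P$ and $\varphi$, I obtain a subset $P'\subseteq\Om AS$ with $\varphi(P')=P$ and $\pj V(P')$ a singleton. Choosing for each $i$ some $u_i\in P'$ with $\varphi(u_i)=s_i$, I get pseudowords $u_1,\ldots,u_n$ satisfying $\varphi(u_i)=s_i$ and $\pj V(u_1)=\cdots=\pj V(u_n)$, which are precisely the hypotheses on $(u_1,\ldots,u_n)$ in Theorem~\ref{t:DA-k-red}.

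Second, I would invoke Theorem~\ref{t:DA-k-red} with the theorem's $\sigma_0$ taken to be the corollary's $\sigma$; this is legitimate because the corollary assumes that $\pv H$ is $\sigma$-reducible for the pointlike problem, which is exactly the hypothesis the theorem imposes on $\sigma_0$. The theorem then delivers terms $w_1,\ldots,w_n$ in the subalgebra $\omup{\sigma\cup\{\_^\omega\}}{A}{S}$, satisfying $\varphi(w_i)=s_i$ and $\pj V(w_1)=\cdots=\pj V(w_n)$.

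Finally, setting $P''=\{w_1,\ldots,w_n\}$, I would observe that $P''\subseteq\omup{\sigma\cup\{\_^\omega\}}{A}{S}$, that $\varphi(P'')=P$, and that $\pj V(P'')$ is a singleton, which by definition witnesses the $\sigma\cup\{\_^\omega\}$-reducibility of $P$. Since $P$ and $\varphi$ were arbitrary, this establishes the corollary. There is no real obstacle: Theorem~\ref{t:DA-k-red} was already phrased so as to operate on the exact data underlying the definition of pointlike reducibility, and the only substantive point is matching the theorem's $\sigma_0$ with the $\sigma$ of the corollary statement, so that the output signature $\sigma_0\cup\{\_^\omega\}$ in the theorem becomes $\sigma\cup\{\_^\omega\}$ in the corollary.
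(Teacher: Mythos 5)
Your proposal is correct and follows essentially the same route as the paper: extract representatives $u_1,\ldots,u_n$ from the pointlike hypothesis, feed them into Theorem~\ref{t:DA-k-red} with $\sigma_0=\sigma$, and read off the resulting $w_i$ as witnesses of $\sigma\cup\{\_^\omega\}$-reducibility. The only cosmetic difference is that the paper attributes the existence of the $u_i$ to a compactness result from \cite{Almeida:1996d}, whereas you obtain them directly from the definition of pointlike given in the preliminaries; both are legitimate since that definition is itself established via the cited compactness argument.
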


\begin{proof}
  Let $S$ be a finite semigroup and let $\{s_1,\ldots,s_n\}$ be a
  \pv{V}-pointlike subset. Fix an onto homomorphism $\varphi:\Om
  AS\to S$. By a general compactness result \cite{Almeida:1996d} there
  are $u_1,\ldots,u_n\in\Om AS$ such that $\varphi (u_i)=s_i$
  ($i=1,\ldots,n$) and $\pj{V}(u_1)=\cdots=\pj{V}(u_n)$. The result
  now follows immediately from Theorem~\ref{t:DA-k-red}.
\end{proof}

The same approach allows us to deal with idempotent pointlike sets.
However, a stronger assumption is needed on the pseudovariety of
groups \pv H. Recall that a system of equations may be associated with
a directed graph $\Gamma$ by viewing both vertices and edges as
variables and assigning to each edge $x\xrightarrow yz$ the equation
$xy=z$. For such a system, we may consider \emph{constraints} in a
finite semigroup, given by a function $\psi:\Gamma\to S$. Given a
continuous onto homomorphism $\varphi:\Om AS\to S$, a \pv
V-\emph{solution} of a thus constrained system is a function
$\gamma:\Gamma\to\Om AS$ such that $\varphi(\gamma(x))=\psi(x)$ for
every $x\in\Gamma$ and the pseudoidentity
$\gamma(x)\gamma(y)=\gamma(z)$ holds in~\pv V for every edge
$x\xrightarrow yz$. A pseudovariety \pv V is said to be
\emph{$\sigma$-reducible for systems of graph equations}, or
\emph{graph $\sigma$-reducible} for shortness, if every constrained
system of equations associated with a finite directed graph $\Gamma$
that admits a \pv V-solution also admits a \pv V-solution
$\gamma:\Gamma\to\oms AS$.

\begin{Thm}
  \label{t:DOcapbarH-iptl}
  Let \pv H be a pseudovariety of groups that is graph
  $\sigma_0$-reducible. Let $\pv V=\pv{DO}\cap\overline{\pv H}$ and
  let $\sigma=\sigma_0\cup\{\_^\omega\}$. Suppose $S$ is a finite
  semigroup, $\varphi:\Om AS\to S$ is an onto homomorphism, and
  $\{s_1,\ldots,s_n\}$ is a \pv{V}-idempotent pointlike subset of~$S$.
  Given $u_1,\ldots,u_n\in\Om AS$ such that $\varphi(u_i)=s_i$
  ($i=1,\ldots,n$) and
  \begin{equation*}
    \label{eq:1id}
    \pj{V}(u_1)=\cdots=\pj{V}(u_n)=\pj{V}(u_n^2),
  \end{equation*}
  there exist $w_1,\ldots,w_n\in\oms AS$
  such that
  \begin{equation*}
    \label{eq:2id}
    \varphi(w_i)=s_i\ (i=1,\ldots,n) %
    \mbox{ and } %
    \pj{V}(w_1)=\cdots=\pj{V}(w_n)=\pj{V}(w_n^2).
  \end{equation*}
\end{Thm}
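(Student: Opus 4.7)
The plan is to mirror the induction on $|c(u_1)|$ from the proof of Theorem~\ref{t:DA-k-red}, carrying the additional condition $\pj V(u_n^2) = \pj V(u_n)$ through the construction and invoking graph $\sigma_0$-reducibility of $\pv H$ at the places where pointlike $\sigma_0$-reducibility of $\pv H$ was used before. As in that proof, we may assume that $S$ has a content function, so $c(u_1) = \cdots = c(u_n)$, and the common $\pv V$-image of the $u_i$ is an idempotent of $\Om AV$ of that content.

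In the base case $c(u_1) = \{a\}$, either the $u_i$ are all the same finite power of $a$ (handled trivially) or each $u_i = a^\omega u_i$, in which case $u_i$ projects into the maximal subgroup of the one-generator free pro-$\pv V$ semigroup, which is a pro-$\pv H$ group. The hypotheses then translate, in that subgroup, to the $u_i$ sharing a common $\pv H$-image that is idempotent in the group, hence the identity. We encode this as a finite constrained system of graph equations with vertices $x_1, \ldots, x_n, e$ and edges forcing $x_i = e$ and $e \cdot e = e$, with $S$-constraints inherited from $\varphi$, and apply graph $\sigma_0$-reducibility of $\pv H$ to obtain $\sigma_0$-solutions $v'_i$. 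Then $w_i = a^\omega v'_i$ is the desired $\sigma$-reduction.

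For the inductive step, we perform the shared central basic factorization of the $u_i$ as in Theorem~\ref{t:DA-k-red} and reduce every side factor $u_{i,p}$ and $u''_{i,p}$ by Theorem~\ref{t:DA-k-red} itself (these are only pointlike, not idempotent pointlike). In the $\|u_1\| = \infty$ case, we pick indices $k < l$ with matching partial-product $S$-values as in that proof, form the $\omega$-power-wrapped candidate $w_i$ with an as-yet-undetermined middle factor $w'_{i,k}$, and observe that the outer $\omega$-powers confine $w_i$ to a regular $\Cl J$-class of $\pv V$ which, by the $\pv{DO}$ structure, is a rectangular band times a group from $\pv H$: the rectangular-band coordinate of $w_i$ is determined combinatorially and matches that of $u_i$, while the group coordinate depends on $w'_{i,k}$. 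Encoding the required $\pv V$-equalities among the $w_i$ together with the global idempotency $\pj V(w_n) = \pj V(w_n^2)$ as a finite constrained system of graph equations on the middle factors, we solve it via graph $\sigma_0$-reducibility of $\pv H$. The $\|u_1\| < \infty$ case is handled analogously: the middle factor has strictly smaller content and admits the inductive treatment, with care taken to wrap the result in $\omega$-powers of suitable content-preserving blocks so that the overall $w_i$ remains idempotent in $\pv V$.

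The main obstacle will be formulating the graph equation system of the inductive step so that its $\pv H$-solvability certifies the global idempotency of $w_i$ in $\pv V$. This requires carefully tracking how the group coordinate of the wrapped expression depends on the middle factor and on the surrounding $\omega$-power blocks, and designing a finite directed graph whose equations and $S$-constraints pin down the required identity-of-group condition while respecting the rectangular-band scaffolding already fixed by the outer factors. Once this encoding is in place, the recursion closes exactly as in the proof of Theorem~\ref{t:DA-k-red}, producing the $\sigma$-terms $w_i$ demanded by the theorem.
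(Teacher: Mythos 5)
Your high-level plan matches the paper's: reduce the side factors via the pointlike version (Theorem~\ref{t:DA-k-red}), and use graph $\sigma_0$-reducibility of $\pv H$ to handle the group coordinate so that the wrapped $w_i$ ends up idempotent over $\pv V$. Your treatment of the base case $c(u_1)=\{a\}$, with loops at a single vertex forcing the $\pv H$-component to be the identity, is also in the right spirit and fills a detail the paper leaves implicit. However, there are two points where your proposal diverges from (or falls short of) the actual argument.

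First, the case $\|u_1\|<\infty$ simply does not occur. The hypothesis that the common value $\pj V(u_i)$ is idempotent forces $\|u_i\|=\infty$: an idempotent $e=e^2$ of a given content in the free pro-$\pv{DO}\cap\overline{\pv H}$ semigroup cannot have a finite iterated central basic factorization, since squaring strictly increases that parameter for non-idempotents. The paper records this explicitly as the first adaptation, and it is what lets the argument focus on the single remaining case. Your proposed treatment of the $\|u_1\|<\infty$ branch --- wrapping the result ``in $\omega$-powers of suitable content-preserving blocks'' --- would also be wrong even if the case arose, because inserting extra $\omega$-powers alters the value under $\varphi$; but since the case is vacuous, you can simply delete it.

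Second, you stop at the crux. You write that ``the main obstacle will be formulating the graph equation system\ldots,'' but that is exactly the content of the theorem's proof, and your proposal leaves it unresolved. The paper constructs it concretely: after arranging that $\varphi^{-1}(\varphi(a))=\{a\}$ for each letter $a$, one observes that idempotency gives the $\pv H$-pseudoidentity $u_{i,0}a_0\cdots u_{i,k}a_k\cdot u'_{i,k}\cdot b_ku''_{i,k}\cdots b_0u''_{i,0}=1$, and encodes this as $n$ cycles of length $4k+5$ sharing a base vertex $v_0$, with edge constraints given by the $\varphi$-images of the factors and vertex constraints propagated from $v_0$ along the cycle. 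Graph $\sigma_0$-reducibility then yields $\sigma_0$-terms for the prefix pieces, suffix pieces, and middle, which are substituted (with the $\omega$-power blocks from the $k<l$ repetition still in place) to give $w_i$. Without this explicit encoding, which guarantees both $\varphi(w_i)=s_i$ and triviality of the $\pv H$-coordinate of each $\pj V(w_i)$, the argument is incomplete.
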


\begin{proof}
  The proof follows the same lines as that of
  Theorem~\ref{t:DA-k-red}. We only mention in detail the necessary
  adaptations. First, the hypothesis that the $\pj{V}(u_i)$ are (the
  same) idempotent implies that $\|u_i\|=\infty$, which restricts the
  type of cases that need to be considered in the main induction step.
  However, the induction argument does not reduce the idempotent
  pointlike problem to the same problem on smaller content, but
  rather to the pointlike problem, which has already been treated in
  Theorem~\ref{t:DA-k-red}. This is why we need to assume again that
  the \pv H-pointlike problem is $\sigma_0$-reducible.

  The other point where a modification is needed is when handling the
  construction of the $\sigma$-terms $w_{i,k}'$. Since the singleton
  subsets $\{a\}$ of~$A^*$ are recognizable, by replacing $S$
  by a suitable finite semigroup, we may assume that
  $\varphi^{-1}(\varphi(a))=\{a\}$ for every $a\in A$. By assumption,
  we know that, for each $i\in\{1,\ldots,n\}$, the following
  pseudoidentity holds in~\pv H:
  \begin{equation*}
    u_{i,0}a_0\cdots u_{i,k}a_k
    \cdot u'_{i,k}\cdot
    b_ku''_{i,k}\cdots b_0u''_{i,0}=1.
  \end{equation*}
  Consider the directed graph $\Gamma$ with $n$ cycles of length
  $4k+5$ based at the same vertex, where the $i$th cycle has
  successive edges
  \begin{equation*}
    x_{i,0},y_{i,0},\ldots,x_{i,k},y_{i,k},z_i,
    y''_{i,k},x''_{i,k},\ldots,y''_{i,0},x''_{i,0}.
  \end{equation*}
  The corresponding system of equations is constrained as follows. The
  edge constraints are given by:
  \begin{align*}
    \psi(x_{i,j})&=\varphi(u_{i,j}),\
    \psi(y_{i,j})=\varphi(a_j),\\
    \psi(x''_{i,j})&=\varphi(u''_{i,j}),\
    \psi(y''_{i,j})=\varphi(b_j),\\
    \psi(z_i)&=\varphi(u'_{i,k}).
  \end{align*}
  For the vertex constraints, we may take an arbitrary constraint $s$
  at the common vertex $v_0$ of the $n$ cycles and then take for the
  constraint of any other vertex $v$ the product of $s$ by the
  constraints of successive edges of the unique path leading from $v_0$
  to~$v$.

  An \pv H-solution of the above system is obtained by assigning to
  the edge variables the values given by
  \begin{align*}
    x_{i,j}&\mapsto u_{i,j},\
    y_{i,j}\mapsto a_j,\ 
    x''_{i,j}\mapsto u''_{i,j},\
    y''_{i,j}\mapsto b_j,\ 
    z_i\mapsto u'_{i,k},
  \end{align*}
  as well as adequate values to the vertex variables. Since \pv H is
  graph $\sigma_0$-reducible by hypothesis, the above constrained
  system admits a solution $\gamma:\Gamma\to\oms AS$. We use $\gamma$
  to define
  \begin{equation*}
    \bar w_{i,j}=\gamma(x_{i,j}),\
    \bar w''_{i,j}=\gamma(x''_{i,j}),\
    w'_{i,k}=\gamma(z_i).
  \end{equation*}
  Finally, we let
  \begin{align*}
    \lefteqn{w_i=  %
      \bar w_{i,0}a_0 \bar w_{i,1}a_1\cdots \bar w_{i,k}a_k %
      (w_{i,k+1}a_{k+1}\cdots w_{i,l}a_l)^\omega} \\
    &\qquad\cdot \bar w'_{i,k} %
    \cdot (b_lw''_{i,l}\cdots b_{k+1}w''_{i,k+1})^\omega %
    b_k\bar w''_{i,k}\cdots b_1\bar w''_{i,1} b_0\bar w''_{i,0}.
  \end{align*}
  Then, each $\pj{V}(w_i)$ is a group element and it must in fact be
  an idempotent because $\gamma$ is an \pv H-solution of the system
  determined by the graph $\Gamma$. Moreover,
  $\varphi(w_i)=\varphi(u_i)$ because of the choice of the pair $k,l$
  and of the constraints on the edges of~$\Gamma$. Hence \pv V is
  $\sigma$-reducible for idempotent pointlike sets.
\end{proof}

\begin{Cor}
  \label{c:DOcapbarH-iptl}
  If the pseudovariety of groups \pv H is graph $\sigma$-reducible,
  then $\pv V=\pv{DO}\cap\overline{\pv H}$ is
  $\sigma\cup\{\_^\omega\}$-reducible for the idempotent pointlike
  problem.\qed
\end{Cor}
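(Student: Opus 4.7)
The plan is to derive this corollary immediately from Theorem~\ref{t:DOcapbarH-iptl}, in exactly the same way that Corollary~\ref{c:DOcapbarH-ptl} was derived from Theorem~\ref{t:DA-k-red}. First, I would fix a finite semigroup $S$, a \pv V-idempotent pointlike subset $\{s_1,\ldots,s_n\}$ of~$S$, and an onto continuous homomorphism $\varphi:\Om AS\to S$; this is the standard set-up required by the theorem.

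Next, invoking the general compactness result of~\cite{Almeida:1996d}, the idempotent pointlike hypothesis on $\{s_1,\ldots,s_n\}$ yields $u_1,\ldots,u_n\in\Om AS$ satisfying $\varphi(u_i)=s_i$ for every $i$ together with $\pj V(u_1)=\cdots=\pj V(u_n)=\pj V(u_n^2)$. Feeding this data into Theorem~\ref{t:DOcapbarH-iptl}, with the role of $\sigma_0$ played by~$\sigma$ (the graph $\sigma$-reducibility of~\pv H is exactly the standing hypothesis of that theorem), produces $\sigma\cup\{\_^\omega\}$-terms $w_1,\ldots,w_n\in\oms AS$ witnessing the analogous identities, which establishes the required $\sigma\cup\{\_^\omega\}$-reducibility of~\pv V for idempotent pointlike sets.

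There is essentially no obstacle here, since all of the substantive work has already been carried out in Theorem~\ref{t:DOcapbarH-iptl}. The only conceptual point worth highlighting is the compactness step, which is what converts the abstract definition of idempotent pointlike---formulated in terms of arbitrary pseudoword witnesses---into a concrete tuple to which the theorem can be directly applied.
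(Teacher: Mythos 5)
Your proposal is correct and matches the paper's intent exactly: the paper leaves this corollary unproved (marked \qed) precisely because it follows from Theorem~\ref{t:DOcapbarH-iptl} by the same compactness argument used to derive Corollary~\ref{c:DOcapbarH-ptl} from Theorem~\ref{t:DA-k-red}, and you have reconstructed that argument faithfully, including the observation that the corollary's $\sigma$ plays the role of the theorem's $\sigma_0$.
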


The case of \pv{DA} has deserved the most interest among the
pseudovarieties of the form $\pv{DO}\cap\overline{\pv H}$.

\begin{Cor}
  \label{c:DA-ptl}
  The pseudovariety \pv{DA} is $\omega$-reducible both for the
  pointlike and idempotent pointlike problems.\qed
\end{Cor}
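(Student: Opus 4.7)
The plan is to deduce the corollary by specializing Corollaries \ref{c:DOcapbarH-ptl} and \ref{c:DOcapbarH-iptl} to the trivial pseudovariety of groups $\pv I$, observing that $\pv{DA} = \pv{DO} \cap \overline{\pv I}$ since $\overline{\pv I} = \pv A$. Taking for $\sigma_0$ the empty signature (consisting only of binary multiplication, which is always present by convention), the first corollary would yield $\{\_^\omega\}$-reducibility of $\pv{DA}$ for the pointlike problem, and the second would give the same for the idempotent pointlike problem. Since the signature $\omega$ is by definition $\{\_^\omega\}$, this is exactly the desired conclusion, so the only thing left to verify is that $\pv I$ is $\emptyset$-reducible for the pointlike problem and graph $\emptyset$-reducible.

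Both verifications are essentially vacuous because $\Om A{\pv I}$ is a one-element semigroup and the projection $\pj I$ is constant, so every system of equations is automatically satisfied over $\pv I$. Concretely, given an onto continuous homomorphism $\varphi : \Om AS \to S$ and any nonempty subset $P$ of $S$ (which is automatically $\pv I$-pointlike), it suffices to choose for each $p \in P$ some finite word $w_p \in A^+$ with $\varphi(w_p) = p$, which exists because $\varphi(A^+) = S$; the set $\{w_p : p \in P\}$ lies in $A^+ \subseteq \oms AS$ and has constant image under $\pj I$. For graph $\emptyset$-reducibility, given a constrained system on a finite graph $\Gamma$ admitting any $\pv I$-solution, the same trick provides a map $\gamma : \Gamma \to A^+ \subseteq \oms AS$ with $\varphi \circ \gamma = \psi$, and all edge equations $\gamma(x)\gamma(y) = \gamma(z)$ hold in $\pv I$ trivially.

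Combining these observations with Corollaries \ref{c:DOcapbarH-ptl} and \ref{c:DOcapbarH-iptl} gives the $\omega$-reducibility of $\pv{DA}$ for both the pointlike and the idempotent pointlike problems. I foresee no obstacle at all: the statement is a direct specialization of the two preceding corollaries to the smallest group pseudovariety, with all the real work already carried out in Theorems \ref{t:DA-k-red} and \ref{t:DOcapbarH-iptl}.
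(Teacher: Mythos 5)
Your proposal is correct and is exactly the argument the paper intends: the corollary is tagged with \qed precisely because it is the specialization $\pv H=\pv I=\pv G_\emptyset$ of Corollaries~\ref{c:DOcapbarH-ptl} and~\ref{c:DOcapbarH-iptl}, using $\pv{DA}=\pv{DO}\cap\pv A=\pv{DO}\cap\overline{\pv I}$. Your explicit verification that $\pv I$ is $\sigma_0$-reducible for pointlikes and graph $\sigma_0$-reducible when $\sigma_0$ contains only multiplication (so $\oms A{S}=A^+$, $\pj{I}$ is constant, and surjectivity of $\varphi$ on $A^+$ does the rest) is a clean way to make the vacuous base case precise, and $\sigma_0\cup\{\_^\omega\}$ is indeed the signature $\omega$.
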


It is natural to expect that the essential ingredients in the proof of
complete $\kappa$-tameness of the pseudovariety \pv R should apply
to~\pv{DA}. Yet the highly technical proof
in~\cite{Almeida&Costa&Zeitoun:2005b} remains to be adapted as only
part of such a program has been carried out
\cite{Moura:2009a,Moura:2009b}.

Further examples of pseudovarieties for which one may apply
Corollaries~\ref{c:DOcapbarH-ptl} and~\ref{c:DOcapbarH-iptl} are
recorded in the following corollaries.

\begin{Cor}
  \label{c:DO}
  The pseudovariety \pv{DO} is $\kappa$-reducible for both pointlike
  and idempotent pointlike sets.
\end{Cor}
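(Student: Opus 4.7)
The plan is to reduce this corollary to the two previous corollaries by taking $\pv H=\pv G$, the pseudovariety of all finite groups. The first key observation is that every finite semigroup has all its subgroups in $\pv G$, so $\overline{\pv G}$ is the pseudovariety of all finite semigroups and $\pv{DO}=\pv{DO}\cap\overline{\pv G}$. This puts us exactly in the framework of Corollaries~\ref{c:DOcapbarH-ptl} and~\ref{c:DOcapbarH-iptl}.

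To apply those corollaries with $\sigma=\kappa$, I need two ingredients: that $\pv G$ is $\kappa$-reducible for the pointlike problem, and that $\pv G$ is graph $\kappa$-reducible. Both are consequences of Ash's celebrated theorem on the type~II conjecture, which establishes the full $\kappa$-tameness (in particular, graph $\kappa$-reducibility) of the pseudovariety of finite groups. This result is precisely what is needed to feed into the hypotheses of the preceding corollaries.

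It then remains to check the signature arithmetic. The augmented signature produced by Corollaries~\ref{c:DOcapbarH-ptl} and~\ref{c:DOcapbarH-iptl} is $\sigma\cup\{\_^\omega\}=\kappa\cup\{\_^\omega\}$; but the $\omega$-power is already expressible in $\kappa$ via $x^\omega=x\cdot x^{\omega-1}$, so this set is just~$\kappa$. Applying Corollary~\ref{c:DOcapbarH-ptl} then yields $\kappa$-reducibility of $\pv{DO}$ for pointlikes, and applying Corollary~\ref{c:DOcapbarH-iptl} yields $\kappa$-reducibility of $\pv{DO}$ for idempotent pointlikes.

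There is no genuine obstacle here: the work is bundled in the two previous corollaries and in Ash's theorem. The only point requiring a moment of care is making sure that graph $\kappa$-reducibility of $\pv G$ (and not merely $\kappa$-reducibility for pointlikes) is available from Ash's theorem; this is standard, since Ash's argument in fact solves all finite systems of group equations with rational constraints, which entails graph $\kappa$-reducibility.
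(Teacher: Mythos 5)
Your proposal is correct and coincides with the paper's own argument: take $\pv H=\pv G$, observe $\pv{DO}=\pv{DO}\cap\overline{\pv G}$, feed graph $\kappa$-reducibility of $\pv G$ (which the paper cites as \cite[Theorem~4.9]{Almeida&Steinberg:2000a}, resting on Ash~\cite{Ash:1991}) into Corollaries~\ref{c:DOcapbarH-ptl} and~\ref{c:DOcapbarH-iptl}, and absorb $\_^\omega$ into $\kappa$. The only cosmetic difference is that you appeal to Ash directly, whereas the paper routes through the Almeida--Steinberg formulation of graph $\kappa$-reducibility for $\pv G$.
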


\begin{proof}
  To apply Corollaries~\ref{c:DOcapbarH-ptl}
  and~\ref{c:DOcapbarH-iptl}, one just needs to observe that
  $\pv{DO}\subseteq\pv S=\overline{\pv G}$ and \pv G is graph
  $\kappa$-reducible by~\cite[Theorem~4.9]{Almeida&Steinberg:2000a},
  which depends on Ash's seminal results~\cite{Ash:1991}.
\end{proof}

Denote by \pv{Ab} the pseudovariety of all finite Abelian groups.

\begin{Cor}
  \label{c:DOcapbarAb}
  The pseudovariety $\pv{DO}\cap\overline{\pv{Ab}}$ is
  $\kappa$-reducible for both pointlike and idempotent pointlike sets.
\end{Cor}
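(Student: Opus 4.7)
The plan is to derive the corollary by applying Corollaries~\ref{c:DOcapbarH-ptl} and~\ref{c:DOcapbarH-iptl} with $\pv H=\pv{Ab}$, exactly as the proof of Corollary~\ref{c:DO} applies them with $\pv H=\pv G$. The first preliminary observation is that, taking $\sigma=\kappa$, the enriched signature $\sigma\cup\{\_^\omega\}$ that appears in those corollaries coincides with $\kappa$: indeed $x^\omega=x^{\omega-1}\cdot x$ holds identically in every profinite semigroup, so $\_^\omega$ is already expressible as a $\kappa$-term. Hence the conclusions of the two corollaries, applied with $\pv H=\pv{Ab}$, give precisely $\kappa$-reducibility of $\pv{DO}\cap\overline{\pv{Ab}}$ for the pointlike and the idempotent pointlike problems, provided the corresponding hypotheses on $\pv{Ab}$ are available.

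Consequently, the work reduces to two inputs about the Abelian pseudovariety: that $\pv{Ab}$ is $\kappa$-reducible for the pointlike problem (needed for Corollary~\ref{c:DOcapbarH-ptl}), and that $\pv{Ab}$ is graph $\kappa$-reducible (needed for Corollary~\ref{c:DOcapbarH-iptl}). Both are known results in the theory of Abelian pseudovarieties: the underlying idea is that relations in a free pro-$\pv{Ab}$ semigroup are governed by linear Diophantine systems over~$\mathbb Z$, whose solutions can be lifted to witnesses among $\kappa$-terms by encoding integer exponents through $\omega$- and $(\omega-1)$-powers. The same linear-algebra strategy, applied simultaneously on the vertex and edge data of a finite directed graph, yields the graph version.

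The harder of the two prerequisites is graph $\kappa$-reducibility of $\pv{Ab}$, since it requires matching vertex values, edge values, and the equations $xy=z$ attached to each edge all at once; this is the step that would be the main obstacle were one to reprove it from scratch here. Granting these established facts as black boxes, the two corollaries apply directly with $\pv H=\pv{Ab}$ and deliver the stated $\kappa$-reducibility of $\pv{DO}\cap\overline{\pv{Ab}}$ for both the pointlike and the idempotent pointlike problems.
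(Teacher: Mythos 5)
Your proposal matches the paper's proof: both apply Corollaries~\ref{c:DOcapbarH-ptl} and~\ref{c:DOcapbarH-iptl} with $\pv H=\pv{Ab}$, reduce the matter to the graph $\kappa$-reducibility of $\pv{Ab}$, and treat that as an established fact (the paper cites~\cite{Almeida&Delgado:2001}, which in fact gives $\kappa$-reducibility of $\pv{Ab}$ for arbitrary finite systems of equations, hence in particular graph $\kappa$-reducibility and $\kappa$-reducibility for pointlikes). Your explicit remark that $\kappa\cup\{\_^\omega\}=\kappa$ via $x^\omega=x^{\omega-1}x$ is correct and is what the paper uses implicitly.
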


\begin{proof}
  The proof is similar to that of Corollary~\ref{c:DO}, taking into
  account that in fact the pseudovariety \pv{Ab} is $\kappa$-reducible
  for arbitrary finite systems of equations
  \cite{Almeida&Delgado:2001} and, thus, in particular, it is graph
  $\kappa$-reducible.
\end{proof}

For a prime $p$, denote by $\pv G_p$ the pseudovariety of all finite
$p$-groups. While $\pv G_p$ is not graph $\kappa$-reducible, as was
observed in~\cite{Almeida&Steinberg:2000a}, the first author has
constructed a signature $\sigma$ containing $\kappa$ and such that
$\pv G_p$ is graph $\sigma$-reducible~\cite{Almeida:1999c}. The proof
of this result depends on an earlier weaker result of Steinberg
\cite{Steinberg:1998a}.

\begin{Cor}
  \label{c:DOcapbarGp}
  The pseudovariety $\pv{DO}\cap\overline{\pv{G}_p}$ is
  $\sigma$-reducible for both pointlike and idempotent pointlike sets,
  where $\sigma$ is the implicit signature constructed
  in~\cite{Almeida:1999c}.\qed
\end{Cor}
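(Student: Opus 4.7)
The plan is to derive this as an immediate consequence of Corollaries~\ref{c:DOcapbarH-ptl} and~\ref{c:DOcapbarH-iptl} applied with $\pv H=\pv G_p$, the only non-trivial input being the result of~\cite{Almeida:1999c} that $\pv G_p$ is graph $\sigma$-reducible for the signature $\sigma$ constructed there (which in turn rests on~\cite{Steinberg:1998a}).

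First I would note that graph $\sigma$-reducibility of a pseudovariety formally entails $\sigma$-reducibility of its pointlike problem, since a pointlike system $x_1=\cdots=x_n$ can be encoded as a constrained graph system on a suitable finite directed graph. This is the same tacit implication already used in the proofs of Corollaries~\ref{c:DO} and~\ref{c:DOcapbarAb} to pass from a graph-reducibility hypothesis to the pointlike-reducibility hypothesis required by Corollary~\ref{c:DOcapbarH-ptl}. Consequently $\pv G_p$ is $\sigma$-reducible for pointlikes, and Corollary~\ref{c:DOcapbarH-ptl} then yields that $\pv{DO}\cap\overline{\pv G_p}$ is $\sigma\cup\{\_^\omega\}$-reducible for pointlike sets. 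Analogously, Corollary~\ref{c:DOcapbarH-iptl} applies directly with graph $\sigma$-reducibility of~$\pv G_p$ as its hypothesis, giving $\sigma\cup\{\_^\omega\}$-reducibility for idempotent pointlike sets.

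It remains only to absorb the $\omega$-power into~$\sigma$. By construction in~\cite{Almeida:1999c}, $\sigma$ contains~$\kappa$, and since $x^\omega=x\cdot x^{\omega-1}$ is a $\kappa$-term, every $\sigma\cup\{\_^\omega\}$-term is equivalent to a $\sigma$-term. Hence the subalgebras $\omup{\sigma\cup\{\_^\omega\}}{A}{S}$ and $\oms AS$ of~$\Om AS$ coincide, and both reducibility statements transfer from~$\sigma\cup\{\_^\omega\}$ to~$\sigma$. I do not foresee any substantive obstacle: the whole content is packaged in the two invoked corollaries and in the deep constructions of~\cite{Almeida:1999c}, which I would treat as a black box.
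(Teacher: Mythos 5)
Your proof is correct and follows exactly the route the paper intends: the corollary is stated with~$\qed$ because it is meant to be immediate from Corollaries~\ref{c:DOcapbarH-ptl} and~\ref{c:DOcapbarH-iptl} together with the graph $\sigma$-reducibility of~$\pv G_p$ from~\cite{Almeida:1999c}, precisely the pattern of Corollaries~\ref{c:DO} and~\ref{c:DOcapbarAb}. You even make explicit two steps the paper leaves tacit, namely that graph $\sigma$-reducibility subsumes pointlike $\sigma$-reducibility (already relied upon silently in the proof of Corollary~\ref{c:DO}), and that $\sigma\supseteq\kappa$ makes $\_^\omega$ superfluous so that $\sigma\cup\{\_^\omega\}$-reducibility collapses to $\sigma$-reducibility.
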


\section*{Acknowledgments}

Work partly supported by the \textsc{Pessoa} French-Portuguese project
``Separation in automata theory: algebraic, logical, and combinatorial
aspects''.
The work of the first author was also partially supported by CMUP
\linebreak (UID/MAT/00144/2013), which is funded by FCT (Portugal)
with national (MEC) and European structural funds through the programs
FEDER, under the partnership agreement PT2020.
The work of the second author was also partially supported by the
European Regional Development Fund, through the program COMPETE, and
by the Portuguese Government through FCT under the project
PEst-OE/MAT/UI0013/2014.
The work of the third author was partly supported by ANR 2010 BLAN
0202 01 FREC.

\bibliographystyle{amsplain}

\begin{thebibliography}{10}

\bibitem{Almeida:1994a}
J.~Almeida, \emph{Finite semigroups and universal algebra}, World Scientific,
  Singapore, 1995.

\bibitem{Almeida:1996c}
\bysame, \emph{A syntactical proof of locality of {DA}}, Int. J. Algebra
  Comput. \textbf{6} (1996), 165--177.

\bibitem{Almeida:1996d}
\bysame, \emph{Some algorithmic problems for pseudovarieties}, Publ. Math.
  Debrecen \textbf{54 Suppl.} (1999), 531--552.

\bibitem{Almeida:1999c}
\bysame, \emph{Dynamics of implicit operations and tameness of pseudovarieties
  of groups}, Trans. Amer. Math. Soc. \textbf{354} (2002), 387--411.

\bibitem{Almeida&Costa&Zeitoun:2004}
J.~Almeida, J.~C. Costa, and M.~Zeitoun, \emph{Tameness of pseudovariety joins
  involving {R}}, Monatsh. Math. \textbf{146} (2005), 89--111.

\bibitem{Almeida&Costa&Zeitoun:2005b}
\bysame, \emph{Complete reducibility of systems of equations with respect to
  \pv{R}}, Portugal. Math. \textbf{64} (2007), 445--508.

\bibitem{Almeida&Costa&Zeitoun:2006}
\bysame, \emph{Pointlike sets with respect to {R} and {J}}, J. Pure Appl.
  Algebra \textbf{212} (2008), 486--499.

\bibitem{Almeida&Delgado:2001}
J.~Almeida and M.~Delgado, \emph{Sur certains syst\`emes d'\'equations avec
  contraintes dans un groupe libre---addenda}, Portugal. Math. \textbf{58}
  (2001), no.~4, 379--387.

\bibitem{Almeida&Steinberg:2000a}
J.~Almeida and B.~Steinberg, \emph{On the decidability of iterated semidirect
  products and applications to complexity}, Proc. London Math. Soc. \textbf{80}
  (2000), 50--74.

\bibitem{Almeida&Steinberg:2000b}
\bysame, \emph{Syntactic and global semigroup theory, a synthesis approach},
  Algorithmic Problems in Groups and Semigroups (J.~C. Birget, S.~W. Margolis,
  J.~Meakin, and M.~V. Sapir, eds.), Birkh{\"a}user, 2000, pp.~1--23.

\bibitem{Ash:1991}
C.~J. Ash, \emph{Inevitable graphs: a proof of the type {II} conjecture and
  some related decision procedures}, Int. J. Algebra Comput. \textbf{1} (1991),
  127--146.

\bibitem{Henckell:1988}
K.~Henckell, \emph{Pointlike sets: the finest aperiodic cover of a finite
  semigroup}, J. Pure Appl. Algebra \textbf{55} (1988), no.~1-2, 85--126.

\bibitem{Henckell:2004}
\bysame, \emph{Idempotent pointlike sets}, Int. J. Algebra Comput. \textbf{14}
  (2004), 703--717, International Conference on Semigroups and Groups in honor
  of the 65th birthday of Prof. John Rhodes.

\bibitem{Henckell&Margolis&Pin&Rhodes:1991}
K.~Henckell, S.~W. Margolis, {J.-\'E.} Pin, and J.~Rhodes, \emph{Ash's type
  {${\rm II}$} theorem, profinite topology and {M}al'cev products. {I}}, Int.
  J. Algebra Comput. \textbf{1} (1991), no.~4, 411--436.

\bibitem{Henckell&Rhodes:1991}
K.~Henckell and J.~Rhodes, \emph{The {t}heorem of {K}nast, the {PG}={BG} and
  {T}ype {II} {C}onjectures}, Monoids and Semigroups with Applications
  (Singapore) (J.~Rhodes, ed.), World Scientific, 1991, pp.~453--463.

\bibitem{Henckell&Rhodes&Steinberg:2010}
K.~Henckell, J.~Rhodes, and B.~Steinberg, \emph{Aperiodic pointlikes and
  beyond}, Int. J. Algebra Comput. \textbf{20} (2010), no.~2, 287--305.

\bibitem{Henckell&Rhodes&Steinberg:2010b}
K.~Henckell, J.~Rhodes, and B.~Steinberg, \emph{A profinite approach to stable
  pairs}, Int. J. Algebra Comput. \textbf{20} (2010), 269--285.

\bibitem{Moura:2009a}
A.~Moura, \emph{Representations of the free profinite object over {DA}}, Int.
  J. Algebra Comput. (2011), 675--701.

\bibitem{Moura:2009b}
\bysame, \emph{The word problem for $\omega$-terms over {DA}}, Theor. Comp.
  Sci. (2011), 6556--6569.

\bibitem{Pin:1995}
J.-E. Pin, \emph{{BG=PG}: A success story}, Semigroups, Formal Languages and
  Groups (Dordrecht) (J.~Fountain, ed.), vol. 466, Kluwer, 1995, pp.~33--47.

\bibitem{PRZ:mfcs:13}
T.~Place, L.~van Rooijen, and M.~Zeitoun, \emph{Separating regular languages by
  piecewise testable and unambiguous languages}, {MFCS'13}, Lect. Notes Comp.
  Sci., vol. 8087, Springer, 2013, pp.~729--740.

\bibitem{pzfo:2014}
Thomas Place and Marc Zeitoun, \emph{Separating regular languages with
  first-order logic}, Proceedings of the Joint Meeting of the 23rd EACSL Annual
  Conference on Computer Science Logic (CSL'14) and the 29th Annual ACM/IEEE
  Symposium on Logic in Computer Science (LICS'14) (New York, NY, USA), ACM,
  2014, pp.~75:1--75:10.

\bibitem{Rhodes&Steinberg:2009qt}
J.~Rhodes and B.~Steinberg, \emph{The $q$-theory of finite semigroups},
  Springer Monographs in Mathematics, Springer, 2009.

\bibitem{Steinberg:1998a}
B.~Steinberg, \emph{Inevitable graphs and profinite topologies: some solutions
  to algorithmic problems in monoid and automata theory, stemming from group
  theory}, Int. J. Algebra Comput. \textbf{11} (2001), 25--71.

\end{thebibliography}

\providecommand{\bysame}{\leavevmode\hbox to3em{\hrulefill}\thinspace}
\providecommand{\MR}{\relax\ifhmode\unskip\space\fi MR }
\providecommand{\MRhref}[2]{%
  \href{http://www.ams.org/mathscinet-getitem?mr=#1}{#2}
}
\providecommand{\href}[2]{#2}

\end{document}